\newtheorem{thm}{Theorem}[section]
\newtheorem{lem}[thm]{Lemma}
\theoremstyle{definition}
\newtheorem{defin}[thm]{Definition}
\newtheorem{rem}[thm]{Remark}
\numberwithin{equation}{section}
\def\eq#1{{\rm(\ref{#1})}}
\def\Eq#1#2{\ifthenelse{\equal{#1}{*}}
  {\begin{equation*}\begin{aligned}[]#2\end{aligned}\end{equation*}}
  {\begin{equation}\begin{aligned}[]\label{#1}#2\end{aligned}\end{equation}}}
\def\D{\mathscr{D}}
\def\E{\mathscr{E}}
\def\M{\mathscr{M}}
\def\P{\mathscr{P}}
\def\F{\mathscr{F}}
\newcommand{\operator}[1]{\mathop{\vphantom{\sum}\mathchoice
{\vcenter{\hbox{\LARGE $#1$}}}
{\vcenter{\hbox{\large $#1$}}}{#1}{#1}}\displaylimits}
\def\Mst_#1^#2{\operator{\mathscr{M}_{\mbox{\scriptsize$\#$}}\!\!}_{#1}^{#2}\,\,}
\newcommand\R{\mathbb{R}}
\newcommand\N{\mathbb{N}}
\DeclareMathOperator{\sign}{sign}
\newcommand{\Hc}{\mathscr{H}}
\DeclareMathOperator{\conc}{conc}
\def\comment#1{}
\title[Estimating the Hardy constant of means]{Estimating the Hardy constant of nonconcave homogenus quasideviation means}
\author{Zsolt P\'ales}
\address{Institute of Mathematics, University of Debrecen, Pf.\ 400, 4002 Debrecen, Hungary}
\email{pales@science.unideb.hu}
\author{Pawe\l{} Pasteczka}
\address{Institute of Mathematics, Pedagogical University of Krak\'ow,  Podchor\k{a}\.{z}ych str 2, 30-084 Krak\'ow, Poland}
\email{pawel.pasteczka@up.krakow.pl}
\thanks{The first author was supported by the K-134191 NKFIH Grant.}
\keywords{Hardy inequality, Hardy constant, homogeneous quasideviation mean, Jensen concavity}
\subjclass[2010]{26D15, 26E60, 39B62}
\newcommand\FAM\Phi
\begin{document}
\begin{abstract}
In this paper, we consider homogeneous quasideviation means generated by real functions (defined on $(0,\infty)$) which are concave around the point $1$ and possess certain upper estimates near $0$ and $\infty$. It turns out that their concave envelopes can be completely determined. Using this description, we establish sufficient conditions for the Hardy property of the homogeneous quasideviation mean and we also furnish an upper estimates for its Hardy constant.
\end{abstract}
\maketitle

\section{Introduction}

The origin of Hardy property goes back to the paper \cite{Har20a}, where this property was proved for all power means below the arithmetic means (with nonoptimal constant). Later this result was improved and extended by Landau \cite{Lan21}, Knopp \cite{Kno28}, and Carleman \cite{Car32} whose results are summarized in the inequality
\Eq{*}{
  \sum_{n=1}^\infty \P_p(x_1,\dots,x_n) \le C(p) \sum_{n=1}^\infty x_n
}
which holds for every sequences $(x_n)_{n=1}^\infty$ with positive terms, where $\P_p$ denotes the $p$-th \emph{power mean},
\Eq{*}{
C(p):=
\begin{cases} 
(1-p)^{-1/p}&p \in (-\infty,0) \cup (0,1), \\ 
e & p=0, \\
\infty & p\in[1,\infty),
\end{cases} 
}
and this constant is sharp, i.e., it cannot be diminished. For more details about the history of the developments related to Hardy type inequalities, see papers Pe\v{c}ari\'c--Stolarsky \cite{PecSto01}, Duncan--McGregor \cite{DunMcg03}, and the book of Kufner--Maligranda--Persson \cite{KufMalPer07}.

It has been also extensively studied by the authors since 2016. There are a number results which allows one to obtain the Hardy constant for the mean. All these results use Kedlaya (or Kedlaya-type) properties \cite{Ked94,Ked99} in their background, which unifies their assumptions. In the most natural setting, we assume that a mean is concave, homogeneous, and
repetition invariant (then it is also monotone).These assumptions are relaxed for example using homogenizations techniques \cite{PalPas19a,PalPas20}, or by replacing repetition invariance by a weaker axiom \cite{Pas21a}. However we have not been able to relax the concavity assumption. Due to this reason, it is difficult to establish the Hardy constant for means which are nonconcave. 

The most recent idea, which arises from the paper \cite{PalPas23a} is to majorize the mean (in the mentioned paper it was a Gini mean) by certain concave mean and calculate its Hardy constant, which is an upper estimate of the Hardy constant of the initial one. The aim of this paper is to genealize this idea to a broader class of means.

Remarkably, as it was mentioned in the Introduction to the paper \cite{PalPas23a}, Gini means was considered in the quasideviation framework. This was the most natural setting due to the earlier, comprehensive study of the Hardy property in this family \cite{PalPas20}. Indeed, the key tool in was to use \cite[Lemma~3.2]{PalPas23a}, which in fact reduces the problem of finding the upper estimate for nonconcave Gini means to the problem of finding the Hardy constant of the corresponding quasideviation means which were already homogeneous and concave.

In this paper we focus on homogeneous quasideviation means generated by functions belonging to a certain class $\Phi$, which means that they are concave around the point $1$ and satisfy some further assumptions. It turns out that their concave envelopes are of a special form which will be described in Theorem~\ref{thm:1}. Using this description, in Theorem~\ref{thm:main1}, we obtain sufficient conditions for the Hardy property of such homogeneous quasideviation means and then, in Theorem~\ref{thm:main2}, we establish upper estimates for their Hardy constants. 

\section{Means and their properties}

A function $\M \colon \bigcup_{n=1}^\infty \R_+^n \to \R_+$ such that $\min(x)\le \M(x)\le \max(x)$ holds for all $x$ in the domain of $\M$ is called a \emph{mean (on $\R_+$)}.Throughout the present note all considered means are defined on $\R_+$, thus we can omit the domain of a mean whenever convenient. We also adopt the standard convention that natural properties like convexity, homogeneity, etc. refer to the respective properties of the $n$-variable function $\M|_{\R_+^n}$ to be valid for all $n \in\N$. 

For a given mean $\M$ let $\Hc(\M)$ denote the smallest nonnegative extended real
number, called the \emph{Hardy constant of $\M$}, such that, for all sequences $(x_1,x_2,\dots)$ of positive elements,
\Eq{*}{
\sum_{n=1}^\infty \M(x_1,\dots,x_n) \le \Hc(\M) \cdot \sum_{n=1}^\infty x_n.
}
Means with a finite Hardy constant are called \emph{Hardy means} (cf.\ \cite{PalPas16}). There are a number of general results for this property obtained by the authors, however we would like to omit them and proceed to the setup of homogeneous quasideviation means. 


\subsection{Homogeneous quasideviation means}

In what follows, we recall the notions of a quasideviation mean (cf.\ \cite{Pal82a}, \cite{Pal89b}).

\begin{defin} A function $E\colon I\times I\to\R$ is said to be a \emph{quasideviation} if
\begin{enumerate}[(a)]
 \item for all elements $x,y\in I$, the sign of $E(x,y)$ coincides with that of $x-y$,
 \item for all $x\in I$, the map $I\ni y\mapsto E(x,y)$ is continuous and,
 \item for all $x<y$ in $I$, the mapping $(x,y)\ni t\mapsto \frac{E(y,t)}{E(x,t)}$ is strictly increasing.
\end{enumerate}
By the results of the paper \cite{Pal82a},
for all $n\in\N$ and $x=(x_1,\dots,x_n)\in I^n$, the equation
\Eq{e}{
  E(x_1,y)+\cdots+E(x_n,y)=0
}
has a unique solution $y$, which will be called the \emph{$E$-quasideviation mean} of $x$ and denoted by $\D_E(x)$. One can easily notice that power means, quasiarithmetic means, Gini means are quasideviation means (see \cite{Pal82a}, \cite{Pal89b} for further details).
\end{defin}

Let $\F$ denote a class of all continuous functions $f\colon\R_+\to\R$ such that 
\begin{enumerate}[(i)]
 \item $\sign(f(t))=\sign(t-1)$ for all $t\in\R_+$, 
 \item for all $x\in (0,1)$, the mapping
$t \mapsto \frac{f(t)}{f(t/x)}$ is strictly increasing on $(x,1)$. 
\end{enumerate}

\begin{lem}[\cite{PalPas23a}, Lemma~2.1] \label{L:HQD} Let $f:\R_+\to\R$ be a function such that $\sign(f(x))=\sign(x-1)$ holds for all $x\in\R_+$ and assume that, for some $p\in\R$, the function $f_p(t):=t^pf(t)$ ($t\in\R_+$) is increasing on $\R_+$ and strictly increasing on $(0,1)$. Then $f$ belongs to $\F$. In particular, if $f$ is increasing on $\R_+$ and strictly increasing on $(0,1)$, then $f\in\F$.
\end{lem}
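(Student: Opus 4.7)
The plan is to verify the two defining axioms of $\F$. Condition (i) is a direct hypothesis, so the task reduces to establishing (ii): for every fixed $x\in(0,1)$, the map $t\mapsto f(t)/f(t/x)$ is strictly increasing on $(x,1)$. For $t\in(x,1)$ one has $t<1$ and $t/x>1$, so by the sign hypothesis $f(t)<0$ and $f(t/x)>0$; hence the ratio is well defined and strictly negative.

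The key reduction is to eliminate $f$ in favour of the monotone function $f_p$. Fix $x<t_1<t_2<1$. Cross-multiplication by the positive quantity $f(t_1/x)f(t_2/x)$ shows that
\[
\frac{f(t_1)}{f(t_1/x)}<\frac{f(t_2)}{f(t_2/x)}
\]
is equivalent to $f(t_1)f(t_2/x)<f(t_2)f(t_1/x)$. Substituting $f(s)=s^{-p}f_p(s)$ on both sides produces the common positive prefactor $(t_1t_2)^{-p}x^{p}$, which cancels, leaving the equivalent inequality
\[
f_p(t_1)\,f_p(t_2/x)\;<\;f_p(t_2)\,f_p(t_1/x).
\]

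At this point the monotonicity and sign properties of $f_p$ finish things off. Strict monotonicity on $(0,1)$ together with the sign condition give $f_p(t_1)<f_p(t_2)<0$, so $0<-f_p(t_2)<-f_p(t_1)$; monotonicity on $\R_+$ together with $1<t_1/x<t_2/x$ gives $0<f_p(t_1/x)\le f_p(t_2/x)$. Therefore
\[
-f_p(t_2)\,f_p(t_1/x)\;\le\;-f_p(t_2)\,f_p(t_2/x)\;<\;-f_p(t_1)\,f_p(t_2/x),
\]
which is precisely the desired inequality after sign rearrangement. The last sentence of the lemma is the special case $p=0$.

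The essentially only nontrivial step is spotting the substitution $f(s)=s^{-p}f_p(s)$ and observing that cross-multiplication in the ratio inequality produces a common positive factor that cancels; after that, the argument reduces to routine sign bookkeeping. Continuity of $f$, which is part of the definition of $\F$, is taken here as implicit in the hypothesis on $f$.
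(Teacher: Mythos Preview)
Your argument is correct. The paper does not supply its own proof of this lemma (it is quoted from \cite{PalPas23a}), so there is nothing to compare against directly; your verification of condition~(ii) via the substitution $f(s)=s^{-p}f_p(s)$, the cancellation of the common positive factor $(t_1t_2)^{-p}x^{p}$, and the subsequent sign-and-monotonicity bookkeeping is exactly the natural route and is carried out cleanly. Your observation that continuity of $f$ is not literally implied by the stated hypotheses (monotone functions may jump) is also accurate; this is a minor imprecision in the lemma's statement rather than in your proof.
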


By \cite{Pal88e}, we know that for all $f \in \F$ the functions $E_f \colon \R_+^2 \to \R$ given by $E_f(x,y):=f(\frac xy)$ is a quasideviation and the corresponding quasideviation mean $\E_f:=\D_{E_f}$ is homogeneous and continuous. Conversely, if $\D_E$ is homogeneous and continuous, then $\D_E=\E_f$ for some $f \in \F$. One can easily check that  power means and Gini means are homogeneous and continuous.

The following results of papers \cite{PalPas18a,PalPas20} are instrumental for us.

\begin{lem}[\cite{PalPas18a}, Theorem~2.3]\label{lem:PalPas18a}
Let $f\colon\R_+\to\R$ be concave such that $\sign(f(x))=\sign(x-1)$ for all $x\in\R_+$. Then $f\in\F$, the function $E_f\colon\R_+^2\to\R$ defined by $E_f(x,y):=f\big(\frac xy\big)$ is a quasideviation and the quasideviation mean $\E_f:=\D_{E_f}$ is homogeneous, continuous, nondecreasing and concave.
\end{lem}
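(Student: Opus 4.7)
The plan is to split the statement into four claims and tackle them in order: (A) $f\in\F$, (B) $E_f$ is a quasideviation, (C) homogeneity, continuity and monotonicity of $\E_f$, and (D) concavity of $\E_f$. The first two goals reduce to verifying axiom (ii) of $\F$ (equivalently axiom (c) of quasideviations), since (i), (a) and (b) are immediate from the sign hypothesis and continuity of $f$. For (C) I will invoke \cite{Pal88e} for homogeneity and continuity and argue nondecreasingness from the sign hypothesis together with concavity.

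For (A), I would exploit the classical consequence of concavity with $f(1)=0$ that the chord-slope function $g(t):=f(t)/(t-1)$ is nonincreasing on $\R_+\setminus\{1\}$ and, by the sign hypothesis, strictly positive there. For $x\in(0,1)$ and $t\in(x,1)$, writing
\Eq{*}{
\frac{f(t)}{f(t/x)}=\frac{g(t)}{g(t/x)}\cdot\frac{x(t-1)}{t-x},
}
the second factor is negative and strictly increasing on $(x,1)$ (an elementary derivative computation), while the first factor is positive and, since $t<t/x$, bounded below by $1$. A short sign- and monotonicity-chase then yields that the product is strictly increasing, giving (ii) and hence (A). Property (B) is then automatic: (a) follows from (i), (b) from continuity of $f$, and (c) from (ii) after the substitution $s=x/t$, $u=y/x$.

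For (C), homogeneity and continuity of $\E_f$ are cited directly from \cite{Pal88e}. To see that $f$ is nondecreasing on $\R_+$, I would use that concavity makes the (super)gradient of $f$ nonincreasing, while the transition of $f$ from negative to positive values at $t=1$ forces this gradient to be nonnegative at $1$; a concave function with nonnegative left-derivative at $1$ cannot decrease on $(0,1]$, and on $(1,\infty)$ a decrease to a negative slope would eventually drive $f$ below $0$, contradicting the sign hypothesis. Given this, if $x_i\le y_i$ for all $i$ and $m:=\E_f(x)$, then $\sum_i f(y_i/m)\ge\sum_i f(x_i/m)=0$, and since $m'\mapsto\sum_i f(y_i/m')$ is strictly decreasing by property (c), its unique root $\E_f(y)$ satisfies $\E_f(y)\ge m$.

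The concavity claim (D), despite being the deepest, turns out to admit a strikingly clean proof once the framework is in place. Fix tuples $x,y\in\R_+^n$ and $\lambda\in(0,1)$, set $m_x:=\E_f(x)$, $m_y:=\E_f(y)$, $\bar m:=\lambda m_x+(1-\lambda)m_y$, and $\alpha:=\lambda m_x/\bar m$, $\beta:=(1-\lambda)m_y/\bar m$, so that $\alpha+\beta=1$ and, entry-wise,
\Eq{*}{
\frac{\lambda x_i+(1-\lambda)y_i}{\bar m}=\alpha\,\frac{x_i}{m_x}+\beta\,\frac{y_i}{m_y}.
}
Concavity of $f$ applied pointwise and summation over $i$, combined with the two defining identities $\sum_i f(x_i/m_x)=\sum_i f(y_i/m_y)=0$, yield $\sum_i f((\lambda x_i+(1-\lambda)y_i)/\bar m)\ge 0$; strict decreasingness of the map $m'\mapsto\sum_i f((\lambda x_i+(1-\lambda)y_i)/m')$ then forces $\E_f(\lambda x+(1-\lambda)y)\ge\bar m$. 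The main obstacle is the verification of (ii) with only concavity of $f$ — rather than strict concavity — at one's disposal; the concavity of $\E_f$ itself then emerges in essentially one application of Jensen's inequality for $f$.
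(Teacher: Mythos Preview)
The paper does not supply its own proof of this lemma; it is quoted verbatim from \cite{PalPas18a}, so there is no in-paper argument to compare against. I will therefore comment only on the correctness of your sketch.

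Parts (C) and (D) are essentially fine. One small overstatement: the map $m'\mapsto\sum_i f(z_i/m')$ need not be \emph{strictly} decreasing (take $f(t)=\min(t-1,1)$, which is constant on $[2,\infty)$), and property~(c) of a quasideviation does not assert this. What you actually need, and what does follow from the quasideviation axioms, is that this map has a unique zero and is positive to its left and negative to its right; then $\sum_i f(z_i/\bar m)\ge 0$ forces $\E_f(z)\ge\bar m$. The same remark applies to the monotonicity step in~(C).

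Part (A), however, has a genuine gap. In your factorisation
\Eq{*}{
\frac{f(t)}{f(t/x)}=\frac{g(t)}{g(t/x)}\cdot\frac{x(t-1)}{t-x},
}
you correctly observe that the second factor is negative and strictly increasing and that the first factor is at least~$1$, but you never establish any monotonicity for the first factor, and in fact $g(t)/g(t/x)$ need not be monotone (both numerator and denominator are nonincreasing in~$t$, which says nothing about the quotient). A product of a strictly increasing negative function with a positive function that is merely $\ge 1$ can easily fail to be increasing, so the ``short sign- and monotonicity-chase'' does not go through as written.

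The repair is immediate and is already sitting in the paper: your own argument in~(C) shows that $f$ is nondecreasing on~$\R_+$ and, because a concave function lying below a horizontal tangent on an interval would stay below it everywhere (contradicting $f>0$ on $(1,\infty)$), that $f$ is strictly increasing on $(0,1)$. Now apply Lemma~\ref{L:HQD} with $p=0$ to conclude $f\in\F$. Equivalently, for $x\in(0,1)$ and $x<t_1<t_2<1$ one has $-f(t_1)>-f(t_2)>0$ and $f(t_2/x)\ge f(t_1/x)>0$, whence $(-f(t_1))f(t_2/x)>(-f(t_2))f(t_1/x)$, which is precisely axiom~(ii). With this correction the remainder of your proof stands.
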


\begin{lem}[\cite{PalPas20}, Theorem~5.4]\label{lem:PalPas20}
Let $f:\R_+\to\R$ be a concave function such that $\sign(f(x))=\sign(x-1)$ holds for all $x\in\R_+$. Then the homogeneous quasideviation mean $\E_f$ is a Hardy mean if and only if the function $x\mapsto f\big(\frac1x\big)$ is integrable over $(0,1]$. In the latter case, $c:=\Hc(\E_f)$ is the unique solution of the equation 
 \Eq{*}{
 \int_0^c f\big(\tfrac1x\big)\:dx=0.
 }.
\end{lem}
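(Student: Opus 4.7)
The plan is to combine Lemma~\ref{lem:PalPas18a} --- which hands us that $\E_f$ is concave, homogeneous, nondecreasing, and continuous, and which together with the defining equation~\eqref{e} makes $\E_f$ repetition invariant --- with the general Kedlaya-type Hardy-constant theory alluded to in the Introduction. I would organize the proof in three substantive steps: first settle the uniqueness of $c$; next use the extremal sequence $x_n=1/n$ to obtain the lower bound $\Hc(\E_f)\ge c$ and the necessity of the integrability hypothesis; and finally prove the matching upper bound.

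\emph{Uniqueness of $c$.} Set $F(c):=\int_0^c f(1/x)\,dx$. The sign condition on $f$ makes $F$ strictly increasing on $(0,1]$ to the finite positive value $F(1)$ and strictly decreasing on $[1,\infty)$. Since $f$ is concave with $f(1)=0$, the assumption $f(0^+)\ge 0$ would force (via concavity) $f\ge 0$ on $(0,1)$, contradicting~(i); hence $f(0^+)\in[-\infty,0)$, and a crude lower bound on $-f(1/x)$ for large $x$ yields $F(c)\to-\infty$ as $c\to\infty$. A unique $c\in(1,\infty)$ therefore solves $F(c)=0$.

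\emph{Lower bound and necessity.} Test against $x_n=1/n$ and set $y_n:=\E_f(1,\tfrac12,\ldots,\tfrac1n)$. By~\eqref{e}, $\sum_{k=1}^n f(1/(ky_n))=0$, which with the substitution $c_n:=ny_n$ becomes the Riemann sum $\tfrac1n\sum_{k=1}^n f(n/(kc_n))=0$ sampled at $u_k=k/n\in(0,1]$. A sign analysis (the integrand's positive part dominates when $c_n\to 0$, while $f(0^+)<0$ dominates when $c_n\to\infty$) keeps $(c_n)$ in a compact subinterval of $(0,\infty)$, and any limit point $c_*$ satisfies $\int_0^1 f(1/(uc_*))\,du=0$; the substitution $x=uc_*$ rewrites this as $\int_0^{c_*} f(1/x)\,dx=0$, so $c_*=c$ by uniqueness and $ny_n\to c$. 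An elementary Abel-summation argument on $\sum_{n\le N}y_n/\sum_{n\le N}x_n$ now yields $\Hc(\E_f)\ge c$. If instead $\int_0^1 f(1/x)\,dx=\infty$, the same Riemann-sum identity forces $c_n\to\infty$ and hence $\Hc(\E_f)=\infty$; this is the necessity of integrability.

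\emph{Upper bound.} This is the main obstacle. Since $\E_f$ is concave, symmetric, homogeneous, monotone, and repetition invariant, Kedlaya's inequality gives
\Eq{*}{
\sum_{k=1}^N \E_f(x_1,\ldots,x_k)\;\le\; N\cdot\E_f(A_1,\ldots,A_N),\qquad A_k:=\tfrac{1}{k}\sum_{j=1}^k x_j.
}
It thus suffices to prove the single-mean estimate $\E_f(A_1,\ldots,A_N)\le c\,A_N$. By homogeneity one may normalize $A_N=1$; the natural route is then to pass to the continuous (integral) version of $\E_f$, for which $g(t)=1/t$ is the extremal function. A direct change of variables in the integral analogue of~\eqref{e} shows that the integral $E_f$-mean of $1/t$ over $[0,T]$ equals $c/T$, so the continuous Hardy ratio is exactly $c$. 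Translating this back to the discrete setting --- regarding $\E_f(A_1,\ldots,A_N)$ as a Riemann-sum approximation of the integral mean --- with care that no multiplicative loss occurs as $N\to\infty$ then closes the argument.
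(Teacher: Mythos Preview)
The paper does not prove this lemma at all: it is quoted verbatim as Theorem~5.4 of \cite{PalPas20} and used as a black box. There is therefore no ``paper's own proof'' to compare against; your proposal is an attempt to reconstruct the argument of \cite{PalPas20} from the outside.

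On its merits, Steps~1 and~2 are essentially correct in outline (the uniqueness argument is fine; the lower-bound argument via $x_n=1/n$ and the Riemann-sum identification of $c_n=ny_n\to c$ is standard, though your ``Abel-summation'' remark hides the usual truncation needed because $\sum 1/n$ diverges). The real problem is Step~3, which you yourself flag as ``the main obstacle'' and then do not actually carry out. After Kedlaya you claim it suffices to show $\E_f(A_1,\dots,A_N)\le c\,A_N$, and you propose to obtain this by viewing $\E_f(A_1,\dots,A_N)$ as a Riemann-sum approximation of an integral mean. That is not a proof: the sequence $(A_k)$ is arbitrary (subject only to $kA_k$ increasing), and nothing you have written controls $\E_f(A_1,\dots,A_N)$ in terms of $A_N$ alone.

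The missing idea is a monotonicity step, not an approximation step. With $s:=\sum_{j=1}^\infty x_j<\infty$ one has $A_k=\tfrac1k\sum_{j\le k}x_j\le s/k$; since $\E_f$ is nondecreasing and homogeneous (Lemma~\ref{lem:PalPas18a}), this gives $\E_f(A_1,\dots,A_N)\le s\,\E_f(1,\tfrac12,\dots,\tfrac1N)=s\,y_N$. Kedlaya then yields $\sum_{k\le N}\E_f(x_1,\dots,x_k)\le Ns\,y_N$, and passing to the limit using $Ny_N\to c$ (your Step~2) gives $\sum_{k}\E_f(x_1,\dots,x_k)\le cs$. This is the route taken in \cite{PalPas16,PalPas18a,PalPas20}; once you insert it, your sketch becomes a proof.
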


Finally, let us recall the following result about the comparison of homogeneous quasideviation means. (See, for example \cite{DarPal82}, \cite[Theorem~10]{Pal88a}, and \cite[Lemma~2.3]{PalPas23a}.)
\begin{lem}\label{lem:comparizon}
For all $f,g \in \F$ with $f \le g$, we have $\E_f\le \E_g$. 
\end{lem}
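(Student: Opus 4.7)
The plan is to pass from the inequality $f \le g$ to an inequality between the means by substituting one mean into the defining implicit equation of the other and exploiting the sign-change behavior of the quasideviation equation. Fix $n \in \N$ and $x = (x_1,\dots,x_n) \in \R_+^n$, and set $\alpha := \E_f(x)$, $\beta := \E_g(x)$. By the very definition of a homogeneous quasideviation mean, $\alpha$ and $\beta$ are characterized by
\Eq{*}{
 \sum_{i=1}^n f\!\left(\tfrac{x_i}{\alpha}\right)=0 \qquad\text{and}\qquad \sum_{i=1}^n g\!\left(\tfrac{x_i}{\beta}\right)=0.
}
I want to conclude $\alpha\le\beta$.

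The key step is to evaluate the $g$-sum at the point $y=\alpha$. Define $\Psi\colon\R_+\to\R$ by $\Psi(y):=\sum_{i=1}^n g(x_i/y)$. The pointwise hypothesis $f\le g$ then gives
\Eq{*}{
 \Psi(\alpha) \;=\; \sum_{i=1}^n g\!\left(\tfrac{x_i}{\alpha}\right) \;\ge\; \sum_{i=1}^n f\!\left(\tfrac{x_i}{\alpha}\right) \;=\; 0 \;=\; \Psi(\beta).
}
Thus it remains to extract an inequality between $\alpha$ and $\beta$ from the inequality $\Psi(\alpha)\ge\Psi(\beta)$. For this, I would show that $\Psi$ is strictly positive on $(0,\beta)$ and strictly negative on $(\beta,\infty)$. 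Axiom~(i) of $\F$ gives $\sign g(t)=\sign(t-1)$, so $\Psi(y)>0$ whenever $y<\min_i x_i$ and $\Psi(y)<0$ whenever $y>\max_i x_i$; together with continuity of $\Psi$ and the fact that, by the basic result of \cite{Pal82a} cited after Definition~2.1, the equation $\Psi(y)=0$ has a unique solution (namely $\beta$), the claimed sign pattern follows. Consequently $\Psi(\alpha)\ge 0=\Psi(\beta)$ forces $\alpha\le\beta$, completing the proof.

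The main obstacle is the uniqueness/sign-change property of $\Psi$; however this is precisely the content of axiom~(c) of a quasideviation as exploited in \cite{Pal82a}, and it is already stated as a black box in the excerpt (``\emph{the equation\/ \eq{e} has a unique solution $y$}''). Once this is invoked, the rest of the argument is a one-line comparison of defining equations, so the proof is genuinely short; the only choice to make is whether to state the sign-change property of $\Psi$ as a separate lemma or to invoke it directly from \cite{Pal82a}.
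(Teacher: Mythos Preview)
Your argument is correct. The paper itself does not give a proof of this lemma; it simply cites \cite{DarPal82}, \cite[Theorem~10]{Pal88a}, and \cite[Lemma~2.3]{PalPas23a}. Your self-contained argument --- evaluating the $g$-equation at $\alpha=\E_f(x)$, using $f\le g$ to get $\Psi(\alpha)\ge 0$, and then invoking the sign-change/uniqueness property of the quasideviation equation from \cite{Pal82a} to conclude $\alpha\le\beta$ --- is precisely the standard proof one finds in those references (in particular in \cite[Lemma~2.3]{PalPas23a}). So there is nothing further to compare: you have written out the proof the paper only points to.
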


\section{The concave envelope of functions belonging to the class $\Phi$}

In this section, we introduce a subclass of homogeneous quasideviation means which we are going to discuss. Let $\FAM$ denote the family of all continuous functions $g \colon \R_+ \to \R$ satisfying the following properties 
\begin{enumerate}[($\Phi1$)]
\item $\sign g(t)=\sign(t-1)$, 
\item $g(t) \le t-1$ for all $t \in \R_+$, 
\item there exist $\alpha,\beta\in [0,\infty]$ with $\alpha<1<\beta$ such that
 \begin{enumerate}[($\Phi3a$)]
  \item $g$ in concave on $(\alpha,\beta)$,
  \item $g$ possesses the following convexity type condition on $(0,\alpha]$:
  \Eq{*}{
    g(t)\leq \tfrac{t}{\alpha}g(\alpha)
    +\tfrac{\alpha-t}{\alpha}g_+(0)
    \qquad(t\in(0,\alpha]),
  }
  where $g_+(0):=\limsup_{s\to0}g(s)$.
  \item $g$ possesses the following convexity type condition on $[\beta,\infty)$,
  \Eq{*}{
    g(t)\leq g(\beta)+q(t-\beta) \qquad(t\in[\beta,\infty)),
  }
  where $q:=\limsup_{s\to\infty}g(s)/s$.
 \end{enumerate}
\end{enumerate}

\subsection{The concave envelope} In this section we aim to describe the concave envelope of functions belonging to the class $\Phi$. 
To do this, we first recall the definition of the concave envelope of real-valued functions from the paper \cite{PalPas21a}. Namely, for a function $f \colon I \to \R$, which possesses a concave majorization, we define $\conc(f) \colon I \to \R$ by 
\Eq{*}{
\conc(f)(t)&:=\inf \{g(t) \mid g \colon I \to \R,\,\, g \text{ is a concave function, and }g \ge f \}.
}
In view of the results of the mentioned paper, we know that the concave envelope admits some natural properties like continuity, concavity and the inequality $f\leq\conc(f)$ holds. In general, for an arbitrary function $f\colon I\to\R$, it could happen that there is no concave function above $f$. On the other hand, each member of $\Phi$ is bounded from above by the affine function $t\mapsto t-1$, thus the concave envelope exists for each element of $\Phi$.

In order to describe the concave envelope of functions belonging to $\Phi$, we introduce the following general transformation. Given an open interval $I\subseteq\R_+$, $a,b \in I\cup\{\inf I,\sup I\}$ with $a<b$ and $p,q \in\R$, for any function $g \colon I \to \R$, we define $\Gamma_{a,b;p,q}(g) \colon \R_+ \to \R$ by
\Eq{*}{
\Gamma_{a,b;p,q}(g)(t):=\begin{cases}
        g(a)+p(t-a) & \text{ for }t \in (0,a],\\
        g(t) & \text{ for }t \in (a,b),\\
        g(b)+q(t-b) & \text{ for }t \in [b,+\infty).
             \end{cases}
}
It is easy to check that if $g$ is continuous then all functions of the form above are also continuous. It is woth noticing that $\Gamma_{a,b;p,q}(g)$ does not depend on $p$ and $q$ if $a=0$ and $b=+\infty$, respectively. In our first main theorem, we show the concave envelope of the elements of $\Phi$ are of this form.
 
For to formulation of the subsequent results, we recall the notions of \emph{left lower} and the \emph{right upper Dini derivatives}: For a function $g:I\to\R$ and a point $x\in I$, they are defined by
\Eq{*}{
  D_-g(x):=\liminf_{t\uparrow x}\frac{g(t)-g(x)}{t-x}
  \qquad\mbox{and}\qquad
  D^+g(x):=\limsup_{t\downarrow x}\frac{g(t)-g(x)}{t-x},
}
respectively.

\begin{thm}\label{thm:1}
Let $g \in \Phi$ and let $\alpha,\beta$ be chosen according to property ($\Phi3$). Then there exist $a \in [\alpha,1]$ and $b \in [1,\beta]$ such that $\conc(g)=\Gamma_{a,b;p,q}(g)$, where
\Eq{*}{
  p:=\begin{cases}
     \liminf\limits_{t \to 0^+} \dfrac{g(a)-g(t)}{a-t} & \mbox{if } \alpha>0,\\[2mm]
     1 & \mbox{if } \alpha=0
     \end{cases}
  \qquad\text{and}\qquad 
  q:=\begin{cases}
     \limsup\limits_{t \to \infty} \dfrac{g(t)}{t} & \mbox{if } \beta<\infty,\\[2mm]
     1 & \mbox{if } \beta=\infty.
     \end{cases}
}
Furthermore $0\leq q\leq 1$, in addition, if $0<\alpha$ and $\beta<+\infty$ then they also satisfy
\Eq{E:gagb}
{
D^+g(a)\leq p\leq D_-g(a)
\qquad\text{and}\qquad 
D^+g(b)\leq q\leq D_-g(b),
}
respectively.
\end{thm}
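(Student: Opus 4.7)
The plan is to construct $\conc(g)$ by a tangent-line recipe: identify contact points $a\in[\alpha,1]$ and $b\in[1,\beta]$ at which an affine piece passing through $(0,g_+(0))$ on the left and an affine piece of slope $q=\limsup_{s\to\infty}g(s)/s$ on the right can be glued to the middle graph $g|_{[a,b]}$, and then verify that the resulting function $h:=\Gamma_{a,b;p,q}(g)$ is both a concave majorant of $g$ and the smallest such.

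I would begin with the easy preliminaries. Because $g(t)\le t-1$ by $(\Phi 2)$, the affine map $t\mapsto t-1$ is a concave majorant, so $\conc(g)$ exists on $\R_+$ and is continuous and concave with $g\le\conc(g)\le t-1$; evaluating at $t=1$ forces $\conc(g)(1)=0$. The bounds $0\le q\le 1$ follow directly: dividing $g(s)\le s-1$ by $s$ and passing to $\limsup$ gives $q\le 1$, while $g\ge 0$ on $[1,\infty)$ gives $q\ge 0$.

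The heart of the argument is the existence of $a$ (that of $b$ is symmetric). In the main case $0<\alpha$ and $\beta<\infty$, I would consider the slope function $\sigma(t):=(g(t)-g_+(0))/t$ on $[\alpha,1]$. The geometric content is that the chord through $(0,g_+(0))$ and $(a,g(a))$ supports $g$ from above to the left of $a$ exactly when $\sigma(a)\in[D^+g(a),D_-g(a)]$, using concavity of $g$ on $(\alpha,\beta)$; I would establish the existence of such an $a\in[\alpha,1]$ by an intermediate-value argument for the discrepancy between $\sigma$ and the Dini derivatives of $g$, using $(\Phi 3b)$ at the left endpoint and the sign behavior $g_+(0)\le 0$, $g(1)=0$ at the right. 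Setting $p:=\sigma(a)$ then recovers the $\liminf$ formula in the statement, since $a-t\to a$ and $\limsup_{t\to 0^+}g(t)=g_+(0)$. The boundary cases $\alpha=0$ or $\beta=\infty$ are treated separately with the convention $p=1$ or $q=1$ replacing the tangent construction, obtained as a degenerate limit.

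Finally I would verify that $h=\Gamma_{a,b;p,q}(g)$ equals $\conc(g)$. Concavity of $h$ on $\R_+$ follows from its two affine end-pieces, concavity of $g|_{(a,b)}$, and the Dini-derivative sandwiches \eqref{E:gagb}. The majorization $h\ge g$ is trivial on $(a,b)$; on $(\alpha,a]$ and $[b,\beta)$ it comes from concavity of $g$ on $(\alpha,\beta)$ together with the slope bounds on $p,q$; on $(0,\alpha]$ and $[\beta,\infty)$ it follows from $(\Phi 3b)$ and $(\Phi 3c)$ combined with the tangent construction. For minimality, any concave majorant $G$ of $g$ satisfies $G\ge h=g$ on $[a,b]$ together with $\limsup_{t\to 0^+}G(t)\ge g_+(0)=h(0^+)$, and concavity then forces $G\ge h$ on $(0,a]$, with the symmetric argument on $[b,\infty)$. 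The main obstacle I anticipate is the clean existence of $a,b$ satisfying \eqref{E:gagb}: the one-sided derivatives of $g|_{(\alpha,\beta)}$ may jump at any point, so the intermediate-value argument must be phrased via $\limsup/\liminf$ rather than via continuity of a derivative function, and the degenerate cases $\alpha=0$ or $\beta=\infty$ require separate care to ensure the convention $p=1$ (respectively $q=1$) is indeed the right value.
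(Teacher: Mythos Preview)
Your overall architecture matches the paper's: construct $h=\Gamma_{a,b;p,q}(g)$, check concavity, check $h\ge g$, check minimality. The substantive difference is how you locate $a$ (and symmetrically $b$). You propose an intermediate-value argument on $[\alpha,1]$ comparing $\sigma(t)=(g(t)-g_+(0))/t$ with the one-sided derivatives of $g$, and you correctly flag this as the delicate point since $D^\pm g$ can jump. The paper avoids this entirely: it takes $a$ to be a \emph{global maximizer} of $\sigma$ over $\R_+$, after first showing (using $(\Phi3b)$ on $(0,\alpha]$ and $(\Phi2)$ on $[1,\infty)$) that the supremum is finite and attained somewhere in $[\alpha,1]$. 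The sandwich $D^+g(a)\le p\le D_-g(a)$ then falls out immediately from maximality of $\sigma$ at $a$, with no intermediate-value reasoning at all. Analogously, $b$ is taken as a maximizer of $\psi(t):=g(t)-qt$.

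This choice of $a$ also repairs a gap in your majorization step. To get $h\ge g$ on $(0,\alpha]$ you invoke $(\Phi3b)$ ``combined with the tangent construction''; but $(\Phi3b)$ only gives $\sigma(t)\le\sigma(\alpha)$ on $(0,\alpha]$, and you still need $\sigma(\alpha)\le\sigma(a)=p$ to conclude $g(t)\le g_+(0)+pt=h(t)$ there. An $a$ produced by an intermediate-value criterion on $[\alpha,1]$ need not satisfy $\sigma(\alpha)\le\sigma(a)$, whereas the maximizer choice gives $\sigma(t)\le p$ for \emph{every} $t\in\R_+$, which is exactly the inequality $g(t)\le h(t)$ on all of $(0,a]$ in one stroke. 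So unless you upgrade your selection of $a$ (and $b$) to a maximizer, the argument as sketched does not close.
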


\begin{proof} If $\alpha=0$, then define $a:=0$ and $p:=1$.
Similarly, if $\beta=\infty$, then define $b:=\infty$ and $q:=1$.

Assume now that $\alpha>0$. Due to the property ($\Phi2$), we can see that $g$ has a finite upper right limit at $0$, moreover,
\Eq{g+}{
  g_+(0)\leq \limsup_{t\to0}(t-1)=-1.
}
Define the map $\varphi:\R_+\to\R$ by
\Eq{*}{
  \varphi(t):=\frac{g(t)-g_+(0)}{t}.
}
Using condition $(\Phi3b$), for all $t\in(0,\alpha]$, we get that
\Eq{*}{
   \varphi(t)\leq \frac{\big(\frac{t}{\alpha}g(\alpha)
    +\frac{\alpha-t}{\alpha}g_+(0)\big)-g_+(0)}{t}
    =\frac{g(\alpha)-g_+(0)}{\alpha}
    =\varphi(\alpha).
}
On the other hand, if $t\in[1,\infty)$, then using property $(\Phi2)$ and $1+g_+(0)\leq 0$, it follows that
\Eq{*}{
  \varphi(t)\leq\frac{t-1-g_+(0)}{t}
  =1-\frac{1+g_+(0)}{t}\leq -g_+(0)=\varphi(1).
}
Therefore, in view of its continuity, the function $\varphi$ is bounded from above and its maximum is attained at an element $a\in[\alpha,1]$. Thus, for all $t\in\R_+$, we have that
\Eq{*}{
  \varphi(t)\leq\varphi(a)
  =\frac{g(a)-\limsup_{s\to0}g(s)}{a}
  =\liminf_{s\to0}\frac{g(a)-g(s)}{a-s}=:p.
}
This inequality, for all $t\in\R_+$, implies that
\Eq{*}{
  g(t)\leq \tfrac{t}{a}g(a)+\tfrac{a-t}{a}g_+(0).
}
Therefore, 
\Eq{*}{
  D^+g(a)&\leq \limsup_{t\downarrow a}\frac{\big(\frac{t}{a}g(a)+\frac{a-t}{a}g_+(0)\big)-g(a)}{t-a}
  =\frac{g(a)-g_+(0)}{a}=p,\\
  D_-g(a)&\geq \liminf_{t\uparrow a}\frac{\big(\frac{t}{a}g(a)+\frac{a-t}{a}g_+(0)\big)-g(a)}{t-a}
  =\frac{g(a)-g_+(0)}{a}=p,
}
which shows the validity of the first two inequalities in \eq{E:gagb}.

If $\beta<\infty$, then define
\Eq{*}{
  q:=\limsup_{t\to\infty}\frac{g(t)}{t}.
}
Then, using properties $(\Phi1)$ and $(\Phi2)$, it follows that
\Eq{*}{
  0\leq q\leq \limsup_{t\to\infty}\frac{t-1}{t}=1,
}
hence $q\in[0,1]$. Now define the function $\psi:\R_+\to\R$ by $\psi(t):=g(t)-qt$. 

In view of condition $(\Phi3c)$, for all $t\in[\beta,\infty)$, we get that
\Eq{*}{
  \psi(t)\leq\psi(\beta).
}
Additionally, applying conditions $(\Phi2)$ and $(\Phi1)$, for $t\in(0,1]$, we get that
\Eq{*}{
  \psi(t)\leq (t-1)-qt=(1-q)t-1\leq -q=\psi(1).
}
This, using also the continuity of $\psi$, shows that $\psi$ is bounded from above over $\R_+$ and it attains its maximum at an element $b\in[1,\beta]$. Consequently, for all $t\in\R_+$, we have that
\Eq{*}{
  g(t)-qt\leq g(b)-qb.
}
Using this inequality, we get
\Eq{*}{
  D^+g(b)&\leq \limsup_{t\downarrow b}\frac{\big(g(b)+q(t-b)\big)-g(b)}{t-b}=q,\\
  D_-g(b)&\geq \liminf_{t\uparrow a}\frac{\big(g(b)+q(t-b)\big)-g(b)}{t-b}=q,
}
which shows the validity of the last two inequalities in \eq{E:gagb}.

To complete the proof, we need to show that $\conc(g)=\Gamma_{a,b;p,q}(g)$. 

Since $g$ is concave on $(\alpha,\beta)$, it has left and right derivatives at each point of this interval.
The function $h:=\Gamma_{a,b;p,q}(g)$ is continuous on $\R_+$ and for its right and left derivatives (using just its definition), we have the following formulas
\Eq{*}{
  D^+h(t)
  =\begin{cases}
    p & \mbox{if } t\in(0,a),\\
    D^+g(t) & \mbox{if } t\in[a,b),\\
    q & \mbox{if } t\in[b,\infty),
   \end{cases}
   \qquad
  D_-h(t)
  =\begin{cases}
    p & \mbox{if } t\in(0,a],\\
    D_-g(t) & \mbox{if } t\in (a,b],\\
    q & \mbox{if } t\in(b,\infty).
   \end{cases}
}
Using the concavity of $g$ over $[a,b]\subseteq[\alpha,\beta]$ we have that $D^+g$ and $D_-g$ are decreasing functions over $[a,b)$ and $(a,b]$, respectively, therefore, in view of the inequalities in \eq{E:gagb}, it follows that 
\Eq{*}{
&p\geq D^+g(a)\geq D^+g(t)\geq D^+g(s)\geq D_-g(b)\geq q \qquad(t,s\in[a,b),\, t\leq s)
\qquad\mbox{and}\\
&p\geq D^+g(a)\geq D_-g(t)\geq D_-g(s)\geq D_-g(b)\geq q \qquad (t,s\in(a,b],\, t\leq s).
}
Thus, we have established that the right (as well as the left) derivative of $h$ is a decreasing function over $\R_+$, which proves that $h=\Gamma_{a,b;p,q}(g)$ is concave over $\R_+$.

\comment{Using that $a\leq1$ and \eq{E:gagb}, we can see that
\Eq{*}{
p \geq D^+g(a)\geq D^+g(1).
}
Thus $1 \le p$.}

We show now that $\Gamma_{a,b;p,q}(g)(t)\geq g(t)$ for all $t\in\R_+$. This inequality is in fact an equality if $t\in(a,b)$. Assume that $t\in(0,a]$. Then, by the inequality $\varphi(t)\leq\varphi(a)$, we have that
\Eq{*}{
  g(t)\leq g_+(0)+t\frac{g(a)-g_+(0)}{a}
  =g_+(0)+tp=g(a)+p(t-a).
}
This proves that $\Gamma_{a,b;p,q}(g)(t)\geq g(t)$ holds for all $t\in(0,a]$.

If $t\in[b,\infty)$, then, by the inequality $\psi(t)\leq\psi(b)$, we have that
\Eq{*}{
  g(t)\leq g(b)+q(t-b),
}
which shows that $\Gamma_{a,b;p,q}(g)(t)\geq g(t)$ is valid for all $t\in[b,\infty)$.

Finally, we verify that $\Gamma_{a,b;p,q}(g)$ is minimal among those concave functions that are nonsmaller than $g$ on $\R_+$. Let $f:\R_+\to\R$ be a concave function such that $f(t)\geq g(t)$ for all $t\in\R_+$. The inequality $f(t)\geq \Gamma_{a,b;p,q}(g)(t)$ is obvious for $t\in(a,b)$. 

Due to the inequality $g\leq f$ and the concavity of $f$ over $\R_+$, we can see that the right limit of $f$ exists at $0$ and is nonsmaller than $g_+(0)$. Thus we can extend $h$ continuously to $[0,\infty)$ and this extension will also be concave. Therefore, for $t\in(0,a]$,
\Eq{*}{
  f(t)=f\big(\tfrac{a-t}{a}\cdot 0+\tfrac{t}{a}\cdot a\big)
  &\geq \tfrac{a-t}{a}f_+(0)+\tfrac{t}{a}f(a)
  \geq \tfrac{a-t}{a}g_+(0)+\tfrac{t}{a}g(a)\\
  &=g(a)+p(t-a)=\Gamma_{a,b;p,q}(g)(t).
}
For $t\in[b,\infty)$, let $s>t$ be arbitrary. Then, by the concavity of $f$, we get
\Eq{*}{
  f(t)=f\big(\tfrac{s-t}{s-b}b+\tfrac{t-b}{s-b}s\big)
  \geq \tfrac{s-t}{s-b}f(b)+\tfrac{t-b}{s-b}f(s)
  \geq \tfrac{s-t}{s-b}g(b)+\tfrac{t-b}{s-b}g(s)
}
We can see that
\Eq{*}{
  \limsup_{s\to\infty}\frac{g(s)}{s-b}
  =\limsup_{s\to\infty}\frac{g(s)/s}{1-(b/s)}=q.
}
Using this equality, upon taking the limsup as $s\to\infty$ in the previous inequality, it follows that
\Eq{*}{
  h(t)\geq 
  \limsup_{s\to\infty}\Big(\tfrac{s-t}{s-b}g(b)+\tfrac{t-b}{s-b}g(s)\Big)
  =g(b)+q(t-b)=\Gamma_{a,b;p,q}(g)(t).
}
This completes the proof of the equality $\conc(g)=\Gamma_{a,b;p,q}(g)$.
\end{proof}

\section{The Hardy property and the Hardy constant}

Once we already know the form of concave envelopes of function is $\Phi$, we can use them to establish upper bounds of homogeneous quasideviations means which are generated by functions belonging to this family.

\begin{lem}
 \label{cor:2}
For every function $g \in \mathcal{F}$ which is bounded from above and satisfies the inequality $g(t)\le t-1$ for all $t \in \R_+$, the mean $\E_g$ possesses the Hardy property and its Hardy constant is less than or equal to $c$, where $c>1$ is the unique solution of the equation
\Eq{c}{
  c-1-\ln(c)=\ln\big(\sup\nolimits_{\R_+} g+1\big).
}
In particular, for all $n\geq2$,
\Eq{cc}{
  c\leq \exp\bigg(\sqrt[n]{n!\ln\big(\sup\nolimits_{\R_+} g+1\big)}\bigg).
}
\end{lem}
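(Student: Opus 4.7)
The plan is to majorize $g$ by the simplest concave function that retains the sign property $\sign(\cdot)=\sign(t-1)$, apply the comparison Lemma~\ref{lem:comparizon}, compute the Hardy constant of the majorant via Lemma~\ref{lem:PalPas20}, and then derive the estimate \eq{cc} by an elementary Taylor-series argument.

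Set $M:=\sup\nolimits_{\R_+} g$; this is finite by assumption and strictly positive, since $g(t)>0$ for $t>1$ by property (i) of the class $\F$. My candidate majorant is $\tilde g(t):=\min(t-1,\,M)$. Being the minimum of two affine functions, it is continuous and concave; the joint hypotheses $g(t)\le t-1$ and $g\le M$ give $g\le\tilde g$ pointwise; and directly from the definition, $\tilde g(t)=t-1<0$ for $t\in(0,1)$ while $\tilde g(t)>0$ for $t>1$, so $\sign(\tilde g(t))=\sign(t-1)$. Lemma~\ref{lem:PalPas18a} then gives $\tilde g\in\F$ and ensures $\E_{\tilde g}$ is a concave homogeneous quasideviation mean, while Lemma~\ref{lem:comparizon} yields $\E_g\le\E_{\tilde g}$, hence $\Hc(\E_g)\le\Hc(\E_{\tilde g})$.

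Next, the function $x\mapsto\tilde g(1/x)$ equals the constant $M$ on $(0,1/(M+1)]$ and equals $1/x-1$ on $[1/(M+1),1]$, so it is bounded and integrable on $(0,1]$. Lemma~\ref{lem:PalPas20} thus applies: $\E_{\tilde g}$ is a Hardy mean and its Hardy constant $c$ is the unique number satisfying $\int_0^c\tilde g(1/x)\,dx=0$. I would split this integral at $1/(M+1)$ and compute each piece explicitly; the boundary contributions at $1/(M+1)$ telescope and leave exactly $c-1-\ln c=\ln(M+1)$, which is \eq{c}. Uniqueness of $c>1$ is immediate because $h(c):=c-1-\ln c$ vanishes at $c=1$ and is strictly increasing on $[1,\infty)$, whereas the right-hand side $\ln(M+1)$ is positive.

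For the estimate \eq{cc}, put $u:=\ln c>0$; then $c-1-\ln c=e^u-1-u=\sum_{k=2}^{\infty}u^k/k!$ is a series of nonnegative terms, so dropping all summands except the one of index $n$ gives $u^n/n!\le\ln(M+1)$ for every $n\ge 2$, i.e., $c=e^u\le\exp\bigl(\sqrt[n]{n!\ln(M+1)}\bigr)$. The only nontrivial step is the explicit evaluation of the integral that produces \eq{c}; once the majorant $\tilde g$ has been identified, everything else is a routine application of the already-cited lemmas together with the Taylor expansion of $e^u$.
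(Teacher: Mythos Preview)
Your proposal is correct and follows essentially the same approach as the paper: majorize $g$ by $\min(t-1,M)$ with $M=\sup g$, invoke the comparison lemma and Lemma~\ref{lem:PalPas20} to reduce to the equation $\int_0^c \min(\tfrac1x-1,M)\,dx=0$, compute the integral by splitting at $x=1/(M+1)$, and finish with the Taylor-series lower bound $e^u-1-u\ge u^n/n!$ for $u=\ln c$. If anything, your write-up is slightly more explicit than the paper's in citing Lemma~\ref{lem:PalPas18a} for $\tilde g\in\F$ and in noting why $c>1$ is unique.
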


\begin{proof}
 Let $M:=\sup_{\R_+} g>0$ and define $h \colon \R_+ \to \R$ by 
 \Eq{h}{
h(t):=\min(t-1,M)=
\begin{cases}
 t-1 & \text{for }t \le M+1,\\
 M & \text{for }t> M+1.
\end{cases}
}
Then we have that $h\in\F$ and $g(t) \le h(t)$ for all $t \in \R_+$. Therefore $\E_g\le \E_h$ and it is sufficient to show that $\E_h$ is a Hardy mean. However, $h$ is obviously concave and 
$\sign(h(t))=\sign(t-1)$ holds for all $t \in \R_+$. Then, by Lemma~\ref{lem:PalPas20}, $\E_h$ is a Hardy mean if and only if the function $x \mapsto h(1/x)$ is integrable over $(0,1]$. 
Indeed, we have that
\Eq{*}{
\int_0^1 h\big(\tfrac1x\big)dx&=\int_0^{\frac{1}{M+1}} h\big(\tfrac1x\big)dx+\int_{\frac{1}{M+1}}^1h\big(\tfrac1x\big)dx
=\int_0^{\frac{1}{M+1}} M\:dx+\int_{\frac{1}{M+1}}^1 \big(\tfrac{1}{x}-1\big)\:dx\\
&=\tfrac{M}{M+1}+\ln(M+1)-\tfrac{M}{M+1}=\ln(M+1)<+\infty,
}
and we are done.

The Hardy constant $c=\Hc(\E_h)$ is the solution of the equation
\Eq{*}{
0=\int_0^c h\big(\tfrac1x\big)dx=\ln(c(M+1))+1-c,
}
which shows that \eq{c} is valid. To verify the last inequality, we use that $c>1$ and estimate
\Eq{*}{
  c-1-\ln(c)&=\exp(\ln(c))-1-\ln(c)
  =\sum_{k=0}^\infty \frac{(\ln(c))^k}{k!}-1-\ln(c)\\
  &=\sum_{k=2}^\infty \frac{(\ln(c))^k}{k!}
  \geq \frac{(\ln(c))^n}{n!} \qquad(n\geq2).
}
This inequality then directly implies that \eq{cc} is valid.
\end{proof}

\begin{rem}
One can easily compute the $\E_h$ mean of $0<x_1\leq x_2\leq\dots\leq x_n$, where $h$ is given by \eq{h} and $M$ is a positive number. Then
$y:=\E_h(x_1,\dots,x_n)$ is the solution of the equation
\Eq{*}{
  \sum_{i=1}^k\Big(\frac{x_i}{y}-1\Big)+\sum_{i=k+1}^n M=0
}
and $k\in\{1,\dots,n\}$ is the largest index such that $x_k\leq y(M+1)$. From this equality, we get that
\Eq{y}{
  y=\frac{1}{k(M+1)-nM}\sum_{i=1}^k x_i.
}
The condition $x_k\leq y(M+1)$ is now equivalent to the inequality
\Eq{k}{
  x_k\leq \frac{1}{k-n\frac{M}{M+1}}\sum_{i=1}^k x_i.
}
Therefore, to compute $y=\E_h(x_1,\dots,x_n)$, one has to find the largest index $k\in\{1,\dots,n\}$ such that \eq{k} (and hence $k>n\frac{M}{M+1}$) be valid, then $y$ is given by \eq{y}.
\end{rem}

In the following result, for any $g \in \F \cap \Phi$, we charaterize the Hardy property of the mean $\E_{\conc(g)}$ and, consequently, we establish a sufficient condition for the Hardy property of $\E_{g}$.

\begin{thm}\label{thm:main1} Let $g \in \F \cap \Phi$ and let $\alpha\in[0,1)$ and $\beta\in(1,+\infty]$ according to property $(\Phi3)$ of $g$. Then $\conc(g)\in\F$ and the mean $\E_{\conc(g)}$ possesses the Hardy property if and only if either $\beta=+\infty$ and $\int_0^1 g(\frac{1}{t})dt<+\infty$, or $\beta<+\infty$ and $g$ is bounded from above. In these cases, we have that $\E_g$ is also a Hardy mean and $\Hc(\E_g)\le \Hc(\E_{\conc(g)})$.
\end{thm}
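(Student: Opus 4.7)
I would invoke Theorem~\ref{thm:1} to write $\conc(g)=\Gamma_{a,b;p,q}(g)$ with $a\in[\alpha,1]$, $b\in[1,\beta]$; this is concave on $\R_+$. Since $t\mapsto t-1$ is itself a concave majorant of $g$ by $(\Phi2)$, I obtain the sandwich $g\le\conc(g)\le t-1$ on $\R_+$. Evaluating at $t=1$ and using $g(1)=0$ from $(\Phi1)$ forces $\conc(g)(1)=0$ and $\sign(\conc(g)(t))=\sign(t-1)$. Together with concavity, Lemma~\ref{lem:PalPas18a} then places $\conc(g)$ in $\F$ and identifies $\E_{\conc(g)}$ as a homogeneous, continuous, concave quasideviation mean, making Lemma~\ref{lem:PalPas20} applicable; it reduces the Hardy property of $\E_{\conc(g)}$ to finiteness of $\int_0^1\conc(g)(1/t)\,dt$.

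The heart of the argument is the case analysis on $\beta$. If $\beta=\infty$ then Theorem~\ref{thm:1} gives $b=\infty$, so $\conc(g)\equiv g$ on $(a,\infty)\supseteq[1,\infty)$, whence $\conc(g)(1/t)=g(1/t)$ throughout $(0,1]$ and the criterion becomes exactly $\int_0^1 g(1/t)\,dt<+\infty$. If $\beta<\infty$, I would split the integral at $1/b$: on $(1/b,1]$ the argument $1/t$ stays in the compact set $[1,b]$ where $\conc(g)$ is continuous and hence bounded, while on $(0,1/b]$ the third piece of $\Gamma_{a,b;p,q}(g)$ yields $\conc(g)(1/t)=g(b)+q(1/t-b)$, which is integrable precisely when $q=0$. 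What remains is the equivalence that $q=0$ if and only if $g$ is bounded above: the forward implication uses $(\Phi3c)$ to pin $g$ down by $g(\beta)$ on $[\beta,\infty)$ while $(\Phi2)$ and continuity handle $(0,\beta]$, and the converse is $\limsup_{s\to\infty}g(s)/s\le 0$ combined with the bound $q\ge 0$ from Theorem~\ref{thm:1}.

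For the final assertion, $g\le\conc(g)$ together with the membership of both in $\F$ yields $\E_g\le\E_{\conc(g)}$ by Lemma~\ref{lem:comparizon}; summing over $n$ and taking the infimum of admissible constants then gives $\Hc(\E_g)\le\Hc(\E_{\conc(g)})$. I expect the main obstacle to be the case $\beta<\infty$: one must track the three regimes of $\Gamma_{a,b;p,q}(g)$ carefully under the inversion $t\mapsto 1/t$, and the equivalence between $q=0$ and boundedness of $g$ from above truly rests on condition $(\Phi3c)$ and not merely on the definition of $q$.
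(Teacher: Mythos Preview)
Your proposal is correct and follows essentially the same route as the paper: invoke Theorem~\ref{thm:1} for the form of $\conc(g)$, verify $\conc(g)\in\F$, apply Lemma~\ref{lem:PalPas20} to reduce to finiteness of $\int_0^1\conc(g)(1/t)\,dt$, split on $\beta$, and finish with Lemma~\ref{lem:comparizon}. The only cosmetic differences are that the paper deduces $\conc(g)\in\F$ from its monotonicity (via Theorem~\ref{thm:1}) rather than from your sandwich argument together with Lemma~\ref{lem:PalPas18a}, and that for the implication ``$g$ bounded above $\Rightarrow$ Hardy'' in the case $\beta<\infty$ it invokes Lemma~\ref{cor:2} instead of arguing directly that boundedness forces $q=0$.
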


\begin{proof} In view of Theorem~\ref{thm:1}, the concave envelope of $g$ is nondecreasing, which implies that it belongs to $\F$.

According to Lemma \ref{lem:PalPas20}, $\E_{\conc(g)}$ is a Hardy mean if and only if the integral $\int_0^1 \conc(g)(\frac{1}{t})dt$ is finite. To check this property, we distinguish two cases. 

If $\beta=+\infty$, then $\conc(g)(t)=g(t)$ for all $t\geq1$.
Consequently, $\int_0^1 \conc(g)(\frac{1}{t})dt=\int_0^1 g(\frac{1}{t})dt$. Thus, $\int_0^1 \conc(g)(\frac{1}{t})dt$ is finite if and only if $\int_0^1 g(\frac{1}{t})dt$ is finite.

If $\beta<+\infty$, then there exists $b\in[1,\beta]$ and $q\ge0$ such that, for all $t\in[b,\infty)$,
\Eq{q}{
  \conc(g)(t)=g(b)+q(t-b).
}
The integral $\int_0^1 \conc(g)(\frac{1}{t})dt$ is finite if and only if $\int_0^{\frac{1}{b}} \conc(g)(\frac{1}{t})dt$ is finite. Then, in view of the equality \eq{q}, we have that
\Eq{*}{
  \int_0^{\frac{1}{b}} \conc(g)(\tfrac{1}{t})dt
  =\int_0^{\frac{1}{b}} g(b)+q(\tfrac{1}{t}-b)dt,
}
which can be finite if and only if $q=0$. In this case, $g$ is bounded from above.

On the other hand, if $g$ is bounded from above, then, according to the Lemma~\ref{cor:2}, it follows that $\E_{\conc(g)}$ is a Hardy mean.

In view of the inequality $g\leq \conc(g)$ and Lemma~\ref{lem:comparizon}, it follows that $\E_g\leq\E_{\conc(g)}$. Therefore, $\E_g$ is also a Hardy mean and $\Hc(\E_g)\le \Hc(\E_{\conc(g)})$ holds.
\end{proof}

In what follows, we describe the Hardy constant of the mean $\E_{\conc(g)}$ provided that it is finite.

\begin{thm}\label{thm:main2}
Let $g \in \F \cap \Phi$ and let $\alpha\in[0,1)$ and $\beta\in(1,+\infty]$ according to property $(\Phi3)$ of $g$. Assume that $\E_{\conc(g)}$ is a Hardy mean and choose $a\in[\alpha,1]$, $b\in[1,\beta]$ and $p,q\in\R$ as stated in Theorem~\ref{thm:1}. 
Then the Hardy constant $c>1$ of the mean $\E_{\conc(g)}$ is determined by the one of following conditions:
\begin{enumerate}[(i)]
 \item If $\alpha=0$ and $\beta=\infty$, then $c$ is the unique solution of the equality
 \Eq{cg+}{
   \int_0^c g(\tfrac{1}{t})dt=0.
 }
 \item If $\alpha=0$ and $\beta<\infty$, then $c$ is the unique solution of the equality
 \Eq{*}{
 \frac{g(b)}{b}+\int_{1/b}^{c}g(\tfrac1t)\:dt=0.
}
 \item If $\alpha>0$, then $c$ is the unique solution of the equality
 \Eq{E:Kg}{
K(g)-\int_{c}^{1/a}g(\tfrac1t)dt&=0 & \text{ if }K(g)\le 0,\\
K(g)+p \ln(ca)+(c-\tfrac1a)(g(a)-pa) &= 0 & \text{ if }K(g)>0,
}
where, for $\beta=+\infty$,
\Eq{*}{
K(g):=
    \int_{0}^{1/a} g(\tfrac1t)\:dt
    =\int_{a}^{\infty} \frac{g(s)}{s^2}\:ds
}
and, for $\beta<+\infty$,
\Eq{*}{
  K(g):=\frac{g(b)}b+\int_{1/b}^{1/a} g(\tfrac1t)\:dt
    =\frac{g(b)}b+\int_{a}^{b} \frac{g(s)}{s^2}\:ds.
}
\end{enumerate}
\end{thm}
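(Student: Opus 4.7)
The plan is to apply Lemma~\ref{lem:PalPas20} to the concave envelope $\conc(g)$, which by Theorem~\ref{thm:main1} already lies in $\F$. After verifying that $\conc(g)$ satisfies the sign condition $\sign(\conc(g)(t))=\sign(t-1)$—which is immediate from $\conc(g)(1)=g(1)=0$ together with the bounds $g\leq\conc(g)\leq t-1$ coming from $(\Phi2)$ and the concavity of $t\mapsto t-1$—the lemma yields that $c=\Hc(\E_{\conc(g)})>1$ is the unique solution of
\Eq{*}{
  \int_0^c \conc(g)(\tfrac1t)\,dt=0.
}
All three conclusions will then follow by substituting the piecewise description $\conc(g)=\Gamma_{a,b;p,q}(g)$ from Theorem~\ref{thm:1}, keeping in mind that, as observed in the proof of Theorem~\ref{thm:main1}, the Hardy property forces $q=0$ whenever $\beta<\infty$.

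Case (i) is immediate, since $a=0$ and $b=\infty$ give $\conc(g)\equiv g$. For case (ii) I would split the integral at $t=1/b$: on $(0,1/b)$ the argument $1/t$ lies in $[b,\infty)$ and $\conc(g)(1/t)=g(b)$, contributing $g(b)/b$, while on $(1/b,c)$ the integrand coincides with $g(1/t)$. For case (iii) the key auxiliary quantity is $K(g):=\int_0^{1/a}\conc(g)(1/t)\,dt$; the substitution $s=1/t$ converts $\int_0^{1/a} g(1/t)\,dt$ into $\int_a^\infty g(s)/s^2\,ds$ when $\beta=\infty$, and produces $g(b)/b+\int_a^b g(s)/s^2\,ds$ when $\beta<\infty$, which recovers both stated formulas for $K(g)$. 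Crucially, $K(g)$ is exactly the value at $c'=1/a$ of the cumulative integral $F(c'):=\int_0^{c'}\conc(g)(1/t)\,dt$, so the sign of $K(g)$ determines the position of $c$ relative to $1/a$.

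The main obstacle is the resulting sign dichotomy in case (iii). Since $\conc(g)(1/t)$ has the sign of $1-t$, the function $F$ is strictly increasing on $(0,1)$ and strictly decreasing on $(1,\infty)$ with $F(1)>0$, so $c$ is the unique point in $(1,\infty)$ at which $F$ returns to zero. If $K(g)\leq 0$ then $c\leq 1/a$, and since $[c,1/a]\subseteq[1/b,1/a]$ (using $c>1\geq 1/b$), the middle piece $\conc(g)(1/t)=g(1/t)$ applies throughout this range, yielding $F(1/a)-F(c)=\int_c^{1/a} g(1/t)\,dt=K(g)$, which is the first equation of \eq{E:Kg}. If $K(g)>0$ then $c>1/a$, and the extra contribution over $(1/a,c)$, where $\conc(g)(1/t)=g(a)+p(1/t-a)$, is a routine evaluation
\Eq{*}{
  \int_{1/a}^c\bigl(g(a)-pa+\tfrac{p}{t}\bigr)\,dt=(c-\tfrac1a)(g(a)-pa)+p\ln(ca),
}
which added to $K(g)$ supplies the second equation of \eq{E:Kg}. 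Uniqueness of $c$ is inherited from Lemma~\ref{lem:PalPas20}.
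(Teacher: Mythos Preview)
Your proposal is correct and follows essentially the same route as the paper: invoke Lemma~\ref{lem:PalPas20} for $\conc(g)$, use the piecewise description $\conc(g)=\Gamma_{a,b;p,q}(g)$ from Theorem~\ref{thm:1}, and split the defining integral $\int_0^c\conc(g)(1/t)\,dt=0$ at $1/b$ and $1/a$ according to the cases. Your explicit verification of the sign condition for $\conc(g)$ and the monotonicity argument for $F$ (justifying why $\operatorname{sign}K(g)$ locates $c$ relative to $1/a$) are slightly more detailed than the paper, which treats these points as evident, but the argument is otherwise the same.
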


\begin{proof} According to Lemma~\ref{lem:PalPas20}, the Hardy constant $c$ of the mean $\E_{\conc(g)}$ is the unique solution of the equation $\psi(c)=0$, where $\psi:\R+\to\R$ is defined by
 \Eq{ceg}{
   \psi(x):=\int_0^x \conc(g)(\tfrac{1}{t})dt.
 }
We rewrite the equation $\psi(c)=0$ in each of the cases listed in the theorem. Using Theorem~\ref{thm:1}, we also have that $\conc(g)=\Gamma_{a,b;p,q}(g)$.

If $\alpha=0$ and $\beta=\infty$, then $g=\conc(g)$, therefore, $\psi(c)=0$ is equivalent to \eq{cg+}.

If $\alpha=0$ and $\beta<\infty$, then $\E_{\conc(g)}$ is a Hardy mean if and only if $q=0$. Now $a=0$ and $\frac1b\leq1<c$, thus
\Eq{*}{
0=\psi(c)=\int_0^{1/b} \conc(g)(\tfrac1t)\:dt+\int_{1/b}^{c} \conc(g)(\tfrac1t)\:dt
=\frac{g(b)}{b}+\int_{1/b}^{c}g(\tfrac1t)\:dt.
}

If $\alpha>0$, then we distinguish two subcases.

(A). If $c \in (1,\tfrac1a]$, then $\psi(\tfrac1a)\le \psi(c)=0$ and 
\Eq{*}{
0=\psi(c)=\int_0^{c} \conc(g)(\tfrac1t)\:dt
=\psi(\tfrac1a)-\int_{c}^{1/a} \conc(g)(\tfrac1t)\:dt=\psi(\tfrac1a)-\int_{c}^{1/a}g(\tfrac1t)dt.
}

(B). If $c \in (\tfrac{1}a,\infty)$, then $\psi(\tfrac1a)> \psi(c)=0$. Then
\Eq{*}{
\psi(c)&=\int_0^{c} \conc(g)(\tfrac1t)\:dt
=\psi(\tfrac1a)+\int_{1/a}^{c} \conc(g)(\tfrac1t)\:dt\\
&=\psi(\tfrac1a)+\int_{1/a}^{c} p(\tfrac1t-a)+g(a)\:dt
=\psi(\tfrac1a)+p \ln(ca)+(c-\tfrac1a)(g(a)-pa).
}
Finally, observe that 
\Eq{*}{
\psi(\tfrac1a)
=\int_{0}^{1/a} \conc(g)(\tfrac1t)\:dt=\int_{0}^{1/a} g(\tfrac1t)\:dt=K(g),
}
if $\beta=+\infty$ and
\Eq{*}{
\psi(\tfrac1a)
=\int_{0}^{1/b} g(b)\:dt+\int_{1/b}^{1/a} g(\tfrac1t)\:dt
=\frac{g(b)}b+\int_{1/b}^{1/a} g(\tfrac1t)\:dt=K(g),
}
if $\beta<+\infty$.

This completes the proof of the assertion in all of the cases.
\end{proof}


\end{document}